\theoremstyle{plain} %default
\newtheorem{lem}{Lemma}
\newtheorem{thm}{Theorem}
\theoremstyle{definition}
\theoremstyle{remark}
\numberwithin{equation}{section}
\def\og{\leavevmode\raise.3ex\hbox{$\scriptscriptstyle\langle\!\langle$~}}
\def\fg{\leavevmode\raise.3ex\hbox{~$\!\scriptscriptstyle\,\rangle\!\rangle$}}
\DeclareMathOperator{\ad}{ad} \DeclareMathOperator{\Ad}{Ad}
\DeclareMathOperator{\Fract}{Fract}
\DeclareMathSymbol{\R}{\mathalpha}{AMSb}{"52}
\DeclareMathSymbol{\C}{\mathalpha}{AMSb}{"43}
\newcommand{\mbb}[1]{\mathbb{#1}}
\newcommand{\K}{\mbb{K}}
\newcommand{\beq}{\begin{equation}}
\newcommand{\eeq}{\end{equation}}
\newcommand{\set}[1]{\left\{#1\right\}}
\newcommand{\bd}{\begin{description}}
\newcommand{\ed}{\end{description}}
\newcommand{\beqr}{\begin{eqnarray}}
\newcommand{\eeqr}{\end{eqnarray}}
\newcommand{\beqt}{\begin{equation}}
\newcommand{\eeqt}{\end{equation}}
\begin{document}

\title{Casimir operators of
Lie algebras with a nilpotent radical}

\author{ Jean-Claude  Ndogmo}

\address{
PO Box 2446\\
Bellville 7535\\
 South Africa.}
\email{ndogmoj@yahoo.com}

\keywords{}

\begin{abstract}
We show that a Lie algebra having a nilpotent radical has a
fundamental set of invariants consisting of Casimir operators. We
give a different proof of this fact in the special and well-known
case where the radical is abelian.
\end{abstract}

\subjclass[2000]{24, 56}
\maketitle

\section{Introduction}
\label{s:intro}
%%%%%%%%%%%%%%%%%%%%%%%%%%
%%%%%%%%%%%%%%%%%%%%%%%%

An important problem arising in the representation theory of Lie
groups is the determination of the invariants of a given
representation. If $\rho$ is a representation of the Lie group $G$
in a finite dimensional vector space $V,$ the invariants of $\rho$
are the elements $v$ of $V$ for which the equality $\rho(g)v=v$
holds for all $g \in G.$   A map $\rho$ is a representation of a
connected Lie group $G$ in the finite dimensional vector space $V$
 if and only if its differential $d \rho$ is a representation of the
Lie algebra $L$ of $G$ in $V.$ Moreover, for any $v\in V$ we have
$\rho(G)v=v$ if and only if $d \rho(L)v=0,$ and the latter condition
defines $v$ as an invariant of $d\rho.$ The invariants of $G$ and
$L$ are therefore the same and in general they are more easily
analyzed on Lie algebras. \par
   When $\rho$ is the adjoint representation $\Ad$ of $G$ in its Lie
algebra $L,$ the invariants of the corresponding representation of
$G$ in the symmetric algebra $S(L)$ are called the polynomial
invariants of $L.$ The algebra $S^I$ generated in $S(L)$ by the
invariant polynomial functions is algebraically isomorphic to the
algebra of Casimir operators \cite{dix59}, which is the center
$\mathcal{Z}(L)$ of the universal enveloping algebra
$\mathfrak{A}(L)$ of $L.$ If $\mathcal{B}= \set{v_1, \dots, v_n}$ is
a basis of the finite $n$-dimensional Lie algebra $L,$ with
structure constants $C_{ij}^k$ in this basis, it is well known
\cite{fom} that the infinitesimal generators $\tilde{v}_i$ of the
adjoint action of $G$ on $S(L)$ are operators of the form
$$ \tilde{v}_i= \sum_j \sum_k C_{ij}^k x_k \partial_j, $$
where $\set{x_1, \dots, x_n}$ is a coordinate system associated with
the basis $\mathcal{B},$ and an element $p \in S(L)$ is in $S^I$ if
and only if

\beq \label{eq:infivar} \tilde{v}_i\cdot p\;=\;0,  \qquad  \text{for
all $i=1, \dots, n$}. \eeq
This shows that the polynomial invariants of $L$ are completely
determined by its structure constants. A maximal set of functionally
independent solutions to \eqref{eq:infivar} for all possible types
of functions $p$ is referred to as a fundamental set of invariants
of $L.$ Thus isomorphic Lie algebras have the same fundamental sets
of invariants. We shall assume that the base field $\K$ of $L$ is of
characteristic zero.

\section{Characterization of the invariants}
\label{s:charater} We write the Levi decomposition of a given finite
dimensional Lie algebra $L$ in the form  $L= \mathcal{S}
\overset{.}{+} \mathcal{R},$ where $\mathcal{S}$ is the Levi factor
and the ideal $\mathcal{R}$ is the radical of $L.$
\begin{lem}
\label{l:isom} If the Lie algebra $L$ has a nilpotent radical, then
it is isomorphic to an algebraic Lie algebra.
\end{lem}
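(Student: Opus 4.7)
The plan is to realize $L$ explicitly as the Lie algebra of an algebraic group constructed from its Levi decomposition $L=\mathcal{S}\overset{.}{+}\mathcal{R}$, by producing algebraic groups for each factor and then assembling them via a rational semidirect product.

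First, I would exhibit algebraic groups $S$ and $R$ whose Lie algebras are $\mathcal{S}$ and $\mathcal{R}$ respectively. For the Levi factor, such an $S$ exists because in characteristic zero every semisimple Lie algebra is algebraic (Chevalley), so one can take, e.g., the adjoint group of $\mathcal{S}$. For the radical, the key observation is that $\mathcal{R}$ is nilpotent, so the Baker--Campbell--Hausdorff series terminates and defines a polynomial product on the underlying affine space of $\mathcal{R}$; this turns $\mathcal{R}$ into a unipotent algebraic group $R$, polynomially isomorphic as a variety to affine space and having $\mathcal{R}$ as its Lie algebra.

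Next, I would integrate the bracket action of $\mathcal{S}$ on $\mathcal{R}$ to a rational action of $S$ on $R$ by algebraic group automorphisms. Since the action of the semisimple $\mathcal{S}$ on the finite-dimensional space $\mathcal{R}$ is completely reducible, it lifts to a rational linear representation of the algebraic group $S$ on the vector space $\mathcal{R}$. Because $\mathcal{S}$ acts by derivations of the bracket of $\mathcal{R}$, the lifted $S$-action preserves this bracket, hence preserves the BCH polynomial, and therefore acts on $R$ by algebraic group automorphisms. Finally I would form the semidirect product $G:=S\ltimes R$, which is an algebraic group because both factors and the action are algebraic, and whose Lie algebra is the semidirect sum $\mathcal{S}\overset{.}{+}\mathcal{R}=L$, giving the claimed isomorphism.

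The main obstacle I expect is the integration step: one must verify that the derivation action of $\mathcal{S}$ on $\mathcal{R}$ lifts to a \emph{rational} (not merely analytic) action of $S$ that is compatible with the BCH group law on $R$. This rests both on the rationality of finite-dimensional representations of semisimple algebraic groups and on the fact that Lie-algebra derivations of a nilpotent Lie algebra integrate to polynomial automorphisms of the associated BCH group. Everything else --- the existence of $S$ and $R$, and the assembly into a semidirect product --- is then essentially formal.
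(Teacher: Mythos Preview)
Your construction is sound but takes a different route from the paper. The paper's proof is much shorter: it applies Ado's theorem to embed $L$ faithfully in some $\mathfrak{gl}(V)$ so that the nilradical---here the whole radical $\mathcal{R}$, by hypothesis---acts by nilpotent endomorphisms, and then quotes two results of Chevalley: a linear Lie algebra consisting of nilpotent endomorphisms is algebraic, and a subalgebra of $\mathfrak{gl}(V)$ is algebraic if and only if its radical is. Thus the paper never builds a group; it simply recognizes $\phi(L)$ as algebraic via a criterion on the radical. Your approach, by contrast, is constructive and makes the algebraic group $G=S\ltimes R$ explicit, at the cost of having to justify the integration step you flag. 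Both arguments exploit the nilpotency of $\mathcal{R}$ in an essential way---the paper to get nilpotent endomorphisms, you to make the BCH product polynomial.

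One small correction to your outline: do not take $S$ to be the \emph{adjoint} group of $\mathcal{S}$. Not every finite-dimensional $\mathcal{S}$-module integrates rationally to the adjoint group (for instance the standard two-dimensional $\mathfrak{sl}_2$-module does not descend to $\mathrm{PSL}_2$), so the action of $\mathcal{S}$ on $\mathcal{R}$ need not lift. Take instead the simply connected semisimple algebraic group with Lie algebra $\mathcal{S}$; in characteristic zero every finite-dimensional $\mathcal{S}$-module then integrates to a rational $S$-module, and the rest of your argument goes through as written.
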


\begin{proof}
By Ado's Theorem, $L$ has a faithful representation $\phi$ in a
finite-dimensional vector space $V,$ in which elements in the
nilradical of $L$ are represented by nilpotent endomorphisms
\cite{jaco}. Since the radical $\mathcal{R}$ of $L$ is nilpotent, it
is equal to its nilradical, and the Lie algebra $\phi (\mathcal{R})$
which consists of nilpotent endomorphisms and is consequently
algebraic \cite{chev3}. Moreover, $\phi (\mathcal{R})$ is the
radical of $\phi (L),$ and a subalgebra of $\mathfrak{g l}(V)$ is
algebraic if and only if its radical is algebraic \cite{chev3}.
Consequently, $\phi (L)$ is algebraic.
\end{proof}

All semisimple and nilpotent Lie algebras belong to the class of Lie
algebras with a nilpotent radical. This is also the case for all
perfect Lie algebras, and more generally for derived subalgebras of
finite dimensional Lie algebras, precisely because the radical of
such Lie algebras is nilpotent \cite{jaco}.\par

By a result of \cite{abl}, every perfect Lie algebra, i.e. a Lie
algebras $L$ for which $[L, L]=L,$  has a fundamental set consisting
of polynomial invariants. However, we notice that this property also
holds for Lie algebras with an abelian radical. Indeed, write the
Levi decomposition of $L$ in the form
\begin{equation}\label{eq:pi}
L= \mathcal{S} \oplus_{\pi} \mathcal{R},
\end{equation}
 where $\pi$ is the restriction to
the semisimple Lie algebra $\mathcal{S}$ of the adjoint
representation of $L$ in the radical $\mathcal{R}.$ If
$\mathcal{R}^{\mathcal{S}}= \set{v\in \mathcal{R} \colon
\pi(\mathcal{S})v=0}$ is the set of invariants of this
representation, then because $\pi$ is semisimple, we have the direct
sum of vector space $\mathcal{R}= \mathcal{R}^\mathcal{S}
\overset{.}{+} [\mathcal{S}, \mathcal{R}].$ \vspace{2mm}
\begin{thm}
\label{th:abelian} Let $L= \mathcal{S} \oplus_{\pi} \mathcal{R}$ be
a Levi decomposition of $L.$
\begin{description}
\item[{\rm(a)}] The Lie algebra $L$  is perfect if and only if
$\mathcal{R}^\mathcal{S} \subseteq [\mathcal{R}, \mathcal{R}].$
\item[{\rm(b)}] If the radical $\mathcal{R}$ of $L$ is abelian, then $L$ has a
fundamental set of invariants consisting of polynomial functions.
\end{description}
\end{thm}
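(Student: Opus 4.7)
The plan is to handle (a) by a direct bracket computation, and then to derive (b) by quotienting $L$ by the central ideal $\mathcal{R}^\mathcal{S}$ and invoking (a) together with the result of \cite{abl}.

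For part (a), I would begin with the expansion $[L,L] = [\mathcal{S},\mathcal{S}] + [\mathcal{S},\mathcal{R}] + [\mathcal{R},\mathcal{R}]$ and use $[\mathcal{S},\mathcal{S}] = \mathcal{S}$, so that $L$ is perfect iff $\mathcal{R} \subseteq [\mathcal{S},\mathcal{R}] + [\mathcal{R},\mathcal{R}]$. The backward implication in (a) is then immediate from the $\mathcal{S}$-module splitting $\mathcal{R} = \mathcal{R}^\mathcal{S} \oplus [\mathcal{S},\mathcal{R}]$. For the forward implication I would note that $[\mathcal{R},\mathcal{R}]$ is itself $\mathcal{S}$-stable (by Jacobi), hence by complete reducibility inherits the isotypic splitting $[\mathcal{R},\mathcal{R}] = ([\mathcal{R},\mathcal{R}] \cap \mathcal{R}^\mathcal{S}) \oplus ([\mathcal{R},\mathcal{R}] \cap [\mathcal{S},\mathcal{R}])$; applying the $\mathcal{S}$-equivariant projection $\mathcal{R} \to \mathcal{R}^\mathcal{S}$ to the equation $v = w_1 + w_2$ (with $w_1 \in [\mathcal{S},\mathcal{R}]$ and $w_2 \in [\mathcal{R},\mathcal{R}]$) then forces $v \in [\mathcal{R},\mathcal{R}] \cap \mathcal{R}^\mathcal{S}$ for each $v \in \mathcal{R}^\mathcal{S}$.

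For part (b), the starting observation is that when $\mathcal{R}$ is abelian, $\mathcal{R}^\mathcal{S}$ lies in the center of $L$: its elements commute with $\mathcal{R}$ because $\mathcal{R}$ is abelian, and with $\mathcal{S}$ by definition. Fixing a basis $\set{v_1, \dots, v_k}$ of $\mathcal{R}^\mathcal{S}$, the associated linear coordinate functions $x_1, \dots, x_k$ are thus immediately Casimir invariants. I would then pass to the quotient $\bar L = L / \mathcal{R}^\mathcal{S}$, whose radical $\bar{\mathcal{R}} \cong [\mathcal{S},\mathcal{R}]$ is still abelian and now satisfies $\bar{\mathcal{R}}^\mathcal{S} = 0$ (using $\mathcal{R}^\mathcal{S} \cap [\mathcal{S},\mathcal{R}] = 0$). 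By part (a), $\bar L$ is perfect, and the result of \cite{abl} then yields a fundamental set of polynomial invariants $P_1, \dots, P_m$ of $\bar L$, expressible in the coordinates $x_{k+1}, \dots, x_n$.

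The remaining task is to verify that $\set{x_1, \dots, x_k, P_1, \dots, P_m}$ is in fact a fundamental set of invariants for $L$, not merely for $\bar L$. The key computation I would carry out is that centrality of $\mathcal{R}^\mathcal{S}$ forces the structure constant $C_{il}^m$ to vanish whenever either $l$ or $m$ is $\le k$, so the operators $\tilde{v}_i$ of \eqref{eq:infivar} involve neither $\partial_l$ nor $x_m$ for $l,m \le k$, while $\tilde{v}_i \equiv 0$ for $i \le k$. Consequently the invariant algebra factors as $S^I(L) = \K[x_1, \dots, x_k] \otimes S^I(\bar L)$, and a coadjoint orbit-dimension count confirms that $k + m$ is exactly the maximal number of functionally independent invariants of $L$. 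The main subtlety I anticipate is tracking these structure-constant vanishings cleanly through the tensor factorization; the rest is routine $\mathcal{S}$-module bookkeeping.
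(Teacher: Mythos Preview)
Your argument for part~(a) is essentially the paper's, only with the forward implication spelled out more carefully via the $\mathcal{S}$-isotypic splitting of $[\mathcal{R},\mathcal{R}]$; the paper compresses this into one line by writing $[\mathcal{S},\mathcal{R}]+[\mathcal{R},\mathcal{R}]$ as a direct sum $[\mathcal{S},\mathcal{R}]\overset{.}{+}\bigl(W\cap[\mathcal{R},\mathcal{R}]\bigr)$.

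For part~(b) your quotient strategy is correct and morally the same as the paper's, but the paper packages it more cleanly: rather than passing to $L/\mathcal{R}^{\mathcal{S}}$, it observes directly that $L$ is the \emph{direct sum of ideals} $\bigl(\mathcal{S}\overset{.}{+}[\mathcal{S},\mathcal{R}]\bigr)\oplus\mathcal{R}^{\mathcal{S}}$, the first summand perfect and the second abelian. This bypasses your structure-constant bookkeeping entirely, since invariants of a direct sum of ideals factor on general grounds. In fact your bookkeeping has a small slip that the direct-sum viewpoint repairs: centrality of $\mathcal{R}^{\mathcal{S}}$ gives $C_{il}^{m}=0$ whenever $i\le k$ or $l\le k$, but it does \emph{not} by itself force $C_{il}^{m}=0$ for $m\le k$ with $i,l>k$. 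That vanishing needs the complement you choose for $\mathcal{R}^{\mathcal{S}}$ to be bracket-closed, i.e.\ you must take $v_{k+1},\dots,v_n$ spanning the ideal $\mathcal{S}\overset{.}{+}[\mathcal{S},\mathcal{R}]$ rather than an arbitrary complement. Once you make that choice explicit, your factorization $S^{I}(L)=\K[x_1,\dots,x_k]\otimes S^{I}(\bar L)$ and the orbit-dimension count go through, and you have recovered exactly the paper's decomposition.
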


\begin{proof}
We know that $L$ is perfect if and only if $\mathcal{R}=
[\mathcal{S},\mathcal{R}] + [\mathcal{R},\mathcal{R}].$ Writing the
right hand side of this last equality as a direct sum
$[\mathcal{S},\mathcal{R}]\overset{.}{+} W \cap
[\mathcal{R},\mathcal{R}]$ of vector subspaces, where $W$ is a
complement subspace of $[\mathcal{S}, \mathcal{R}]$ in
$\mathcal{R},$ we see that $L$ is perfect if and only if
$\mathcal{R}^\mathcal{S} \subseteq [\mathcal{R}, \mathcal{R}],$
which proves part (a). For part (b) we note first that by a result
of \cite{ndog04}, if the representation $\pi$ does not possess a
copy of the trivial representation then $L$ is perfect, and the
result follows. If $\pi$ does have a copy of the trivial
representation, then $\mathcal{R}^\mathcal{S} \neq 0,$ and by part
(a) above, $L$ is not perfect. However, $L$ is in this case a direct
sum of the perfect ideal $\mathcal{S}\overset{.}{+} [\mathcal{S},
\mathcal{R}] $ and the abelian ideal $\mathcal{R}^\mathcal{S}.$ It
then follows again that all the invariants of $L$ can be chosen to
be polynomials.
\end{proof}

\begin{lem}
\label{l:rational} A Lie algebra with a nilpotent radical has a
fundamental set of invariants consisting of rational functions.
\end{lem}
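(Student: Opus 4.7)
The strategy is to reduce to the algebraic case via Lemma~\ref{l:isom} and then appeal to the classical theorem of Rosenlicht on rational invariants of algebraic group actions. By Lemma~\ref{l:isom}, there is a faithful representation $\phi\colon L \to \mathfrak{gl}(V)$ whose image $\phi(L)$ is an algebraic Lie algebra, and by the observation in the introduction that fundamental sets of invariants depend only on the structure constants, it suffices to produce a fundamental set of rational invariants for $\phi(L)$.

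Let $G \subset \mathrm{GL}(V)$ denote a connected algebraic group with Lie algebra $\phi(L)$; its existence is guaranteed by the algebraicity of $\phi(L)$. The adjoint action of $G$ on the affine space $\phi(L)$ is a regular action of an algebraic group on a variety, so by Rosenlicht's theorem there exists a $G$-invariant dense open subset $U$ of $\phi(L)$ and rational functions $f_1, \dots, f_r \in \K(\phi(L))^G$ that separate the $G$-orbits in $U$. Here $r$ is the transcendence degree of $\K(\phi(L))^G$ over $\K$, and
$$ r \;=\; \dim \phi(L) - \max_{x \in \phi(L)} \dim (G\cdot x), $$
which is precisely the number of functionally independent solutions of the system \eqref{eq:infivar} associated with $\phi(L)$.

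To close the argument, I would check that $G$-invariance of a rational function on $\phi(L)$ is equivalent to annihilation by the vector fields $\tilde{v}_i$ of \eqref{eq:infivar} on some dense open set. This is a standard consequence of the connectedness of $G$: infinitesimal invariance integrates along the one-parameter subgroups that generate $G$, so annihilation by all $\tilde{v}_i$ forces $G$-invariance, and conversely. Together with the previous step this shows that $f_1,\dots,f_r$ form a fundamental set of invariants of $L$ consisting of rational functions. The main obstacle is the passage between the algebraic formulation of Rosenlicht's theorem and the differential-geometric notion of functional independence used to define fundamental sets of invariants, which has to be justified on a suitable common open set where all the rational invariants are regular and their differentials are linearly independent.
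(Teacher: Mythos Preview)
Your argument follows the same two-step outline as the paper's: first reduce to the algebraic case via Lemma~\ref{l:isom}, then invoke a known result guaranteeing rational invariants for algebraic group actions. The paper's own proof is a one-line citation of a result of Dixmier \cite{dix2} (that any algebraic Lie algebra has a fundamental set of rational invariants), whereas you unpack essentially the same content through Rosenlicht's theorem and sketch the passage from orbit separation to functional independence. So the approaches coincide; yours is simply more explicit about what underlies the cited result.

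One slip to fix: the invariants in question live in $S(L)$ and its fraction field $\mathsf{K}(L)$, which are naturally identified with polynomial (respectively rational) functions on the dual space $L^{\!*}$. Hence Rosenlicht's theorem must be applied to the \emph{coadjoint} action of $G$ on $\phi(L)^{\!*}$, not to the adjoint action on the affine space $\phi(L)$ as you wrote; taken literally, your formulation would yield invariants in the wrong field $\Fract(S(\phi(L)^{\!*}))$. The correction is immediate, since the coadjoint action of an algebraic group is again a regular action on an affine space, and the dimension count $r=\dim L-\max_{\xi\in L^{\!*}}\dim(G\cdot\xi)$ then matches the number of independent solutions of \eqref{eq:infivar}.
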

\begin{proof}
Lemma \ref{l:isom} reduces the proof to the case of algebraic Lie
algebras, and the lemma readily follows from a result of J. Dixmier
\cite{dix2} asserting that any algebraic Lie algebra has a
fundamental set of invariants consisting of rational functions.\par
\end{proof}
   In the sequel we shall denote by $\Fract (A)$
the field of fractions of a Noetherian and integral ring $A.$ Set
$\mathfrak{K}(L)= \Fract (\mathfrak{A}(L)),$ and $\mathsf{K}(L)=~
\Fract(S(L)),$ and for each $x \in L,$ denote by
$\ad_{\mathsf{K}(L)} x$ the derivation of $\mathsf{K}(L)$ that
extends $\ad_L x$ and thus defines a representation of $L$ in
$\mathsf{K}(L).$ The invariants of this representation are called
the rational invariants of $L.$ It should be noted that
$\mathsf{K}(L)$ is isomorphic the field $\K (x_1, \dots, x_n)$ of
rational functions in $n$ commuting variables. Similarly, for each
$x \in L,$ denote by $\ad_{\mathfrak{K}(L)} x$ the derivation of
$\mathfrak{K}(L)$ that extends $\ad_L x.$ Finally, denote by
$\mathcal{C}(L)$ and $\mathcal{F}(L)$  the center of
$\mathfrak{K}(L)$ and $\mathsf{K}(L)$ respectively, when they are
endowed with the adjoint representation. The center $S^I$ of $S(L)$
is also given by $S^I= \set{p \in S(L)\colon \ad_{S(L)} (L) (p)=0},$
and we have $\Fract (S^I) \subset \mathcal{F}(L).$ By a result of
\cite{rent}, $\mathcal{C}(L)$ and $\mathcal{F}(L)$ are isomorphic
fields. Moreover, we have the following result \cite{abl} in which
$L^{\!*}$ denote the dual vector space of $L.$
\begin{lem}
\label{l:quotient} We have $f \in \mathcal{F}(L)$ if and only if
there exists some weight $\lambda \in L^{\!*}$ of the adjoint
representation of $L$ in $S(L)$ such that $f = p_1/p_2,$ where $p_1,
p_2 \in S(L)_{\lambda} = \set{p \in S(L)\colon \ad_{S(L)} x \,(p) =
\lambda(x)p, \text{ for all $x \in L$}} $
\end{lem}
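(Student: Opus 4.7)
The plan is to prove the two implications separately. The reverse implication is a routine computation: since $\ad_{\mathsf{K}(L)} x$ extends $\ad_{L} x$ as a derivation on the quotient field, applying the quotient rule to $f = p_1/p_2$ with $p_1, p_2 \in S(L)_\lambda$ gives
\[
\ad_{\mathsf{K}(L)}(x) \left( \frac{p_1}{p_2} \right) \;=\; \frac{\lambda(x) p_1 \cdot p_2 - p_1 \cdot \lambda(x) p_2}{p_2^2} \;=\; 0,
\]
so $f \in \mathcal{F}(L)$.

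For the nontrivial forward direction, I would exploit the fact that $S(L) \cong \K[x_1, \dots, x_n]$ is a unique factorization domain. First, write $f = p_1/p_2$ in lowest terms, i.e.\ with $\gcd(p_1, p_2) = 1$. The hypothesis $\ad_{\mathsf{K}(L)}(x) \cdot f = 0$ translates, after clearing denominators, into the identity $(\ad x)(p_1) \cdot p_2 = p_1 \cdot (\ad x)(p_2)$ in $S(L)$. Coprimality then forces $p_1 \mid (\ad x)(p_1)$ and $p_2 \mid (\ad x)(p_2)$.

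The second key ingredient is that $\ad_{S(L)} x$, being the derivation of $S(L)$ induced by the linear endomorphism $\ad_{L} x$ of $L$, preserves the natural grading of $S(L)$. Hence $(\ad x)(p_i)$ has the same total degree as $p_i$, and the cofactor $(\ad x)(p_i)/p_i$ must be a scalar. Writing $(\ad x)(p_i) = \mu_i(x) p_i$ with $\mu_i(x) \in \K$ and substituting into the identity above yields $\mu_1(x) = \mu_2(x) =: \lambda(x)$. Linearity of $x \mapsto \ad x$ immediately gives $\lambda \in L^*$, and both $p_1, p_2$ then lie in $S(L)_\lambda$. I do not expect a serious obstacle: the only delicate point is justifying that $(\ad x)(p_i)/p_i$ is a scalar rather than a nonconstant polynomial factor, but this falls out at once from the degree-preservation of $\ad_{S(L)} x$ on the graded algebra $S(L)$.
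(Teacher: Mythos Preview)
The paper does not actually prove this lemma; it is quoted from \cite{abl} and stated without proof. Your argument supplies a correct, self-contained proof. The reverse implication is indeed immediate from the quotient rule for the extended derivation. For the forward implication, reducing $f=p_1/p_2$ to lowest terms in the UFD $S(L)\cong \K[x_1,\dots,x_n]$, using coprimality to obtain $p_i \mid (\ad x)(p_i)$, and then invoking the fact that $\ad_{S(L)} x$ preserves the standard grading to force the cofactor to be a scalar, is exactly the classical argument (and is essentially what appears in \cite{abl}). One small point of phrasing: when you write that $(\ad x)(p_i)$ ``has the same total degree as $p_i$'', you should allow for the degenerate case $(\ad x)(p_i)=0$; there the scalar $\mu_i(x)$ is simply $0$ and the conclusion $\mu_1(x)=\mu_2(x)$ still follows from $(\ad x)(p_1)\,p_2 = p_1\,(\ad x)(p_2)$.
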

\vspace{2mm}
   If $\lambda$ is in $L^{\!*},$ an element of
$S(L)_{\lambda}$ is called a $\lambda$-semi-invariant of $L$ in
$S(L).$  It is clear that if the weight space of any such $\lambda$
is not reduced to zero, then $\lambda$ defines a one dimensional
representation of $L$ in $\K,$ which vanishes on any perfect
subalgebra of $L$. \vspace{2mm}
\begin{thm}
\label{th:allcas} If the radical of the Lie algebra $L$ is
nilpotent, then $L$ has a fundamental set of invariants consisting
of Casimir operators.
\end{thm}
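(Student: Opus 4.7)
The plan is to combine Lemmas~\ref{l:rational} and~\ref{l:quotient} with the nilpotency of the radical to force every semi-invariant weight occurring in $S(L)$ to be trivial. Once this is established, every rational invariant of $L$ will be a quotient of genuine polynomial invariants, so $\mathcal{F}(L)=\Fract(S^I)$, and a transcendence basis drawn from $S^I$ will supply the desired Casimir operators.

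First, Lemma~\ref{l:rational} furnishes a fundamental set of rational invariants, giving $\mathcal{F}(L)$ a finite transcendence degree $r$ over $\K$. The heart of the argument is to show that if $\lambda\in L^{\!*}$ satisfies $S(L)_{\lambda}\neq 0$, then $\lambda=0$. By the observation preceding the theorem, such a $\lambda$ is a one-dimensional representation of $L$ and therefore vanishes on every perfect subalgebra; in particular $\lambda|_{\mathcal{S}}=0$. To see that $\lambda|_{\mathcal{R}}=0$ as well, fix $x\in\mathcal{R}$. Since $\mathcal{R}$ coincides with the nilradical, $\ad_L x$ is nilpotent, and because $\ad_{S(L)}x$ is the degree-preserving derivation extension of $\ad_L x$, the Leibniz rule shows that on every homogeneous component of degree $d$ one has $(\ad_{S(L)}x)^{N}=0$ for $N$ sufficiently large. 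I would then pick a nonzero homogeneous component $p$ of some semi-invariant in $S(L)_{\lambda}$, which is itself a $\lambda$-semi-invariant because the action preserves degree. The identity $(\ad_{S(L)}x)^{N}p=\lambda(x)^{N}p=0$ then forces $\lambda(x)=0$.

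With $\lambda=0$ as the only admissible weight, Lemma~\ref{l:quotient} yields $\mathcal{F}(L)=\Fract(S^I)$. Since $\Fract(S^I)$ has the same transcendence degree $r$ over $\K$ and is generated as a field by the integral domain $S^I$, I can extract an algebraically independent system $p_1,\dots,p_r\in S^I$. These are functionally independent polynomial invariants, maximal in number, and therefore form a fundamental set of invariants of $L$; through the isomorphism $S^I\cong\mathcal{Z}(L)$ recalled in the introduction, they correspond to Casimir operators.

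The main obstacle is the nilpotency step showing that $\lambda$ vanishes on $\mathcal{R}$: it relies on correctly transferring the nilpotency of $\ad_L x$ on $L$ to local nilpotency of its derivation extension on each graded piece of $S(L)$, and then observing that a nonzero semi-invariant of a locally nilpotent derivation must carry weight zero. The rest of the argument is essentially bookkeeping with transcendence bases, once Lemma~\ref{l:quotient} is in force.
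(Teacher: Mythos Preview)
Your proof is correct and follows essentially the same route as the paper: invoke Lemma~\ref{l:rational} to reduce to rational invariants, use the Levi decomposition together with nilpotency of $\ad_L x$ for $x\in\mathcal{R}$ (hence local nilpotency of $\ad_{S(L)}x$) to show the only weight is $0$, and then apply Lemma~\ref{l:quotient} to get $\mathcal{F}(L)=\Fract(S^I)$ and pass to Casimir operators. Your presentation is in fact slightly more explicit than the paper's in justifying local nilpotency via the grading and in spelling out the transcendence-basis extraction from $S^I$.
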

\begin{proof}
Since $L$ has a nilpotent radical, we may assume by Lemma
\ref{l:isom} that it is algebraic. It has therefore a fundamental
set of invariants that consists of rational invariants, by Lemma
~\ref{l:rational}.  In this case, we readily see that a sufficient
condition for $L$ to have only polynomial invariants is for the
equality $\mathcal{F}(L) = \Fract (S^I)$ to hold, and because of
Lemma ~\ref{l:quotient}, to prove this last equality it suffices to
verify that the only weight of the adjoint representation of $L$ in
$S(L)$ is 0. Consider the Levi decomposition $L= \mathcal{S}
\overset{.}{+}\mathcal{R},$ and let $\lambda$ be any weight of
$\ad_{S(L)}.$ If $x \in \mathcal{S},$ then clearly $\lambda (x)=0.$
On the other hand if $x \in \mathcal{R},$
 $ \ad_L x $ is nilpotent  and $\ad_{S(L)} x$ is locally nilpotent, and
hence $\lambda (x)=0.$ The rest of the theorem follows from the
isomorphism between $\mathcal{F}(L)$ and $\mathcal{C}(L).$
\end{proof}

Special cases of Theorem ~\ref{th:allcas} are known for $L$
semisimple \cite{dix}, $L$ nilpotent \cite{ber}, and L perfect
\cite{abl}. This theorem therefore unifies and extends seemingly
unrelated results asserting that the invariants for these particular
types of Lie algebras can all be chosen as Casimir operators, and
shows that the only Lie algebras that may not have a fundamental set
of invariants consisting of Casimir operators are to be found only
among the non nilpotent solvable Lie algebras.

 \vspace{3mm}

\end{document}